\numberwithin{equation}{section}
\newcommand{\B}{\mathbb{B}}
\newcommand{\N}{\mathbb{N}}
\newcommand{\R}{\mathbb{R}}
\newcommand{\sfd}{{\sf d}}
\renewcommand{\d}{{\mathrm d}}
\newcommand{\restr}[1]{\lower3pt\hbox{\(|_{#1}\)}}
\newcommand{\nchi}{{\raise.3ex\hbox{\(\chi\)}}}
\newcommand{\fr}{\penalty-20\null\hfill\(\blacksquare\)}
\newcommand{\X}{{\rm X}}
\newcommand{\LIP}{{\rm LIP}}
\newcommand{\Lip}{{\rm Lip}}
\newcommand{\lip}{{\rm lip}}
\newcommand{\Cyl}{{\rm Cyl}}
\newtheorem{theorem}{Theorem}[section]
\newtheorem{corollary}[theorem]{Corollary}
\newtheorem{lemma}[theorem]{Lemma}
\newtheorem{remark}[theorem]{Remark}
\title[Smooth approximations preserving asymptotic Lipschitz bounds]
{Smooth approximations preserving \\ asymptotic Lipschitz bounds}
\author{Enrico Pasqualetto}
\address{Department of Mathematics and Statistics,
P.O.\ Box 35 (MaD), FI-40014 University of Jyvaskyla}
\email{enrico.e.pasqualetto@jyu.fi}
\begin{document}
\date{\today} 
\keywords{Lipschitz function, smooth cylindrical function, approximation, asymptotic Lipschitz constant,
metric approximation property, weighted Banach space, Sobolev space, function of bounded variation}
\subjclass[2020]{46B28, 46G05, 46B20, 46T20, 53C23, 46E35, 49J52}
\begin{abstract}
The goal of this note is to prove that every real-valued Lipschitz function on a Banach space can
be pointwise approximated on a given \(\sigma\)-compact set by smooth cylindrical functions whose
asymptotic Lipschitz constants are controlled. This result has applications in the study of metric
Sobolev and BV spaces: it implies that smooth cylindrical functions are dense in energy in these
kinds of functional spaces defined over any weighted Banach space.
\end{abstract}
\maketitle
\section{Introduction}
Smooth approximations of continuous and Lipschitz functions on Banach spaces have been deeply investigated, see
e.g.\ \cite{DGZ93,FMZ06,FHHMZ11,HJ14} and the references therein for an account of the vast literature about this topic.
In this note, we establish a quantitative smooth approximation result for real-valued Lipschitz functions
defined on a Banach space \(\B\). The approximants are \emph{smooth cylindrical functions}, which form an algebra of infinitely-differentiable
functions (in the Fr\'{e}chet sense), see \eqref{eq:def_Cyl}. The approximation is achieved on a \(\sigma\)-compact subset of \(\B\),
with a pointwise control on the \emph{asymptotic slopes} \(\lip_a(f_n)\) (that associate to \(x\in\B\) the asymptotic Lipschitz constant
of \(f_n\) at \(x\), see \eqref{eq:def_lip_a}) along the approximating sequence \((f_n)_n\).
Specifically, in Theorem \ref{thm:approx_with_cyl} we prove that, given a bounded Lipschitz function \(f\colon\B\to\R\) and a \(\sigma\)-compact
set \(E\subseteq\B\), there exists an equi-bounded equi-Lipschitz sequence \((f_n)_n\) of smooth cylindrical functions \(f_n\colon\B\to\R\) with
\begin{equation}\label{eq:approx_prop}
\lim_n f_n(x)=f(x),\qquad\varlimsup_n\lip_a(f_n)(x)\leq\lip_a(f)(x)
\end{equation}
for every \(x\in E\). On the one hand, the approximation is obtained just on \(\sigma\)-compact sets, but on the other hand the result
is valid for arbitrary Banach spaces. The proof of Theorem \ref{thm:approx_with_cyl} combines the fact that each Banach space
is contained in a Banach space having the \emph{metric approximation property} (Corollary \ref{cor:embed_in_MAP})
with an extension result by Di Marino--Gigli--Pratelli \cite{DMGP20}, which ensures that any Lipschitz
function can be extended preserving its asymptotic Lipschitz constants. As a consequence of Theorem \ref{thm:approx_with_cyl},
if \(\mu\) is a Radon measure on \(\B\), then for any bounded Lipschitz function \(f\colon\B\to\R\) there is an equi-bounded
equi-Lipschitz sequence \((f_n)_n\) of smooth cylindrical functions \(f_n\colon\B\to\R\) such that \eqref{eq:approx_prop} holds for
\(\mu\)-a.e.\ \(x\in\B\). Cf.\ the last paragraph of Section \ref{s:approx_result}.
\medskip

The above approximation result is tailored to the study of metric Sobolev and BV spaces defined over a
weighted Banach space. In metric geometry and in nonsmooth analysis, first-order \emph{Sobolev spaces} \(W^{1,p}(\X,\mu)\)
of exponent \(p\in[1,\infty)\) (see \cite{Cheeger00,Shanmugalingam00,AmbrosioGigliSavare11,AmbrosioGigliSavare11-3,
DiMarino14,EB20,AILP24}) and the space \(BV(\X,\mu)\) of \emph{functions of bounded variation}
(see \cite{Mir03,ADM2014,DiMarino14,Martio16,NobiliPasqualettoSchultz23}) over a metric measure space
\((\X,\sfd,\mu)\) play a fundamental role. A distinguished class of metric measure spaces is the one of
\emph{weighted Banach spaces}, i.e.\ separable Banach spaces \(\B\) equipped with a finite Borel measure \(\mu\).
In the specific setting of weighted Euclidean spaces, notions of Sobolev and BV spaces were introduced and studied in \cite{BBS97,BBF99,Zhi00}
-- prior to the development of a metric Sobolev and BV calculus -- with applications in the analysis of several variational problems,
e.g.\ shape optimisation \cite{BB01,BL22,BB22}, optimal transport with gradient penalisation \cite{Louet14} and homogenisation \cite{Zhi02,HM17}.
The consistency between the notions in \cite{BBS97,BBF99,Zhi00} and the metric theory was proved in \cite{Gig:Pas:21,LPR21,GL22}. Recently, also Sobolev spaces of
exponent \(p\in(1,\infty)\) over (infinite-dimensional or non-Hilbertian) weighted Banach spaces were investigated in \cite{Pa:Ra:23} (for the reflexive case)
and in \cite{LP24} (for the general case), while \(W^{1,1}\) and \(BV\) have not been studied in this framework yet. Informally, on any weighted Banach space \((\B,\mu)\)
smooth cylindrical functions are \emph{dense in energy} in \(W^{1,p}(\B,\mu)\) for all \(p\in[1,\infty)\) and in \(BV(\B,\mu)\); we will prove it as a corollary of
Theorem \ref{thm:approx_with_cyl}, see Theorem \ref{thm:dens_cyl} for the precise statement. Whereas this result was previously known for \(p>1\) (it follows from
\cite[Theorem 5.2.7]{Sav19}, see also \cite{FornasierSavareSodini22,Sodini22,LP24}), it is new for \(W^{1,1}(\B,\mu)\) and \(BV(\B,\mu)\).
The density in energy of smooth functions allows to transfer geometric or analytic information (e.g.\ Hilbertianity, reflexivity
or uniform convexity) from the Banach space \(\B\) to the Sobolev and BV spaces over \((\B,\mu)\), due to the regular behaviour of the
asymptotic slope \(\lip_a(f)\) of a smooth function \(f\colon\B\to\R\) (which coincides at \(x\in\B\) with the norm of the Fr\'{e}chet differential
of \(f\) at \(x\)). See for example \cite[Corollary 5.3.11]{Sav19} or \cite[Corollary 5.5]{Sodini22}.
\subsection*{Acknowledgements}
The author wishes to thank Miguel Garc\'{i}a-Bravo for the useful discussions on the topics of this paper.
The author acknowledges the support by the MIUR-PRIN 202244A7YL project ``Gradient Flows and Non-Smooth Geometric
Structures with Applications to Optimization and Machine Learning'' and by the Research Council of Finland (project
``Functions of bounded variation on metric-measure structures'', grant 362898).
\section{Preliminaries}
\subsection{Metric spaces}
Given a metric space \((\X,\sfd)\), we denote by \(\LIP(\X)\) the space of all real-valued Lipschitz functions on \(\X\),
and by \(\LIP_b(\X)\) its subspace consisting of bounded Lipschitz functions. The Lipschitz constant of \(f\in\LIP(\X)\)
on a set \(E\subseteq\X\) will be denoted by \(\Lip(f;E)\), and we will shorten \(\Lip(f)\coloneqq\Lip(f;\X)\).
The \textbf{asymptotic slope} \(\lip_a(f)\colon\X\to[0,\Lip(f)]\) of \(f\) is given by
\begin{equation}\label{eq:def_lip_a}
\lip_a(f)(x)\coloneqq\inf_{r>0}\Lip(f;B_r(x))\quad\text{ for every }x\in\X,
\end{equation}
where \(B_r(x)\) stands for the open ball of center \(x\) and radius \(r\). The following asymptotic-slope-preserving
Lipschitz extension result was proved by Di Marino--Gigli--Pratelli \cite[Theorem 1.1]{DMGP20}.
\begin{theorem}\label{thm:extension_lip_a}
Let \((\X,\sfd)\) be a metric space. Let \(E\subseteq\X\) be a non-empty set. Let \(f\in\LIP(E)\) and \(\varepsilon>0\)
be given. Then there exists an extension \(\bar f\in\LIP(\X)\) of \(f\) such that \(\Lip(\bar f)\leq\Lip(f)+\varepsilon\) and
\[
\lip_a(\bar f)(x)=\lip_a(f)(x)\quad\text{ for every }x\in E.
\]
Moreover, if \(f\in\LIP_b(E)\), then its extension \(\bar f\) can be chosen so that \(\inf_E f\leq\bar f\leq\sup_E f\) on \(\X\).
\end{theorem}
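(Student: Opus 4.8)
The plan is to isolate the one genuinely nontrivial half of the statement and then produce \(\bar f\) by an inf-convolution against a cost adapted to the \emph{local} Lipschitz behaviour of \(f\). First I would observe that the lower bound on the asymptotic slope is automatic: if \(h\in\LIP(\X)\) is any extension of \(f\), then for every \(x_0\in E\) and \(r>0\) one has \(\Lip(h;B_r(x_0))\ge\Lip(h;B_r(x_0)\cap E)=\Lip(f;B_r(x_0)\cap E)\), so passing to the infimum in \(r\) gives \(\lip_a(h)(x_0)\ge\lip_a(f)(x_0)\). Hence it suffices to build an extension with \(\Lip(\bar f)\le\Lip(f)+\eps\), with \(\lip_a(\bar f)(x_0)\le\lip_a(f)(x_0)\) for \(x_0\in E\), and (in the bounded case) with values in \([\inf_E f,\sup_E f]\); the reverse inequality then upgrades the slope bound to the required equality. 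I would also record that \(x\mapsto\lip_a(f)(x)\) is upper semicontinuous on \(E\), since \(\sup_{x'\in B_{r/2}(x_0)}\lip_a(f)(x')\le\Lip(f;B_r(x_0))\); this is what lets one read the asymptotic slope off a whole neighbourhood.

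The construction I would use is \(\bar f(x)\coloneqq\inf_{y\in E}\{f(y)+c(x,y)\}\), where \(c\) is a metric-type cost on \(\X\) obtained by chaining a rate function: choose an upper semicontinuous \(g\colon\X\to[0,\Lip(f)+\eps]\) with \(g=\lip_a(f)\) on \(E\), and set \(c(x,y)\coloneqq\inf_{\text{chains}}\sum_{i=1}^n g(p_{i-1})\,\sfd(p_{i-1},p_i)\) over finite chains \(x=p_0,\dots,p_n=y\) (equivalently, the largest subadditive cost whose infinitesimal rate is \(g\)). The one-step chain gives \(c(x,y)\le(\Lip(f)+\eps)\,\sfd(x,y)\), while concatenation yields the triangle inequality \(c(x,z)\le c(x,y)+c(y,z)\); together, as in the McShane argument, these force \(\bar f\) to be real-valued and \((\Lip(f)+\eps)\)-Lipschitz. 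For the upper slope bound at \(x_0\in E\) I would use upper semicontinuity of \(g\) together with \(g(x_0)=\lip_a(f)(x_0)\): given \(\delta>0\), the rate stays below \(\lip_a(f)(x_0)+\delta\) on a small ball, so short chains there are cheap, \(c(x,x_0)\le(\lip_a(f)(x_0)+\delta)\,\sfd(x,x_0)\) for \(x\) near \(x_0\), and feeding this into the infimum gives \(\lip_a(\bar f)(x_0)\le\lip_a(f)(x_0)\).

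The heart of the matter is the extension property \(\bar f|_E=f\), which is equivalent to the domination \(c(x,y)\ge f(x)-f(y)\) for all \(x,y\in E\): the inequality \(\bar f\le f\) on \(E\) is immediate from the term \(y=x\) (since \(c(x,x)=0\)), and domination supplies the reverse inequality. This is the step I expect to be the main obstacle, because the rate \(g\) must be large enough that its chaining cost pays for every increment of \(f\), yet must coincide infinitesimally with \(\lip_a(f)\) on \(E\). I would construct \(g\) as the least upper semicontinuous rate that is a discrete upper gradient of \(f\) and equals \(\lip_a(f)\) on \(E\), and prove domination by a fine-chain telescoping estimate at the scale where \(g\) controls the difference quotients of \(f\). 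The delicate case, where the DMGP argument does its real work, is when \(\lip_a(f)(x_0)\) is small while \(f\) still varies at intermediate scales around \(x_0\): there \(g\) must rise away from \(x_0\) quickly enough to cover those increments without destroying upper semicontinuity at \(x_0\). This is also why I would avoid Whitney decompositions and partitions of unity, whose bounded-overlap estimates require a doubling hypothesis absent from general (infinite-dimensional) Banach spaces, whereas the chaining cost is available in any metric space.

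Finally, in the bounded case I would post-compose with the \(1\)-Lipschitz truncation \(T(t)\coloneqq\min\{\sup_E f,\max\{\inf_E f,t\}\}\). Since \(f\) already takes values in \([\inf_E f,\sup_E f]\), the map \(T\) fixes \(f\) on \(E\), so \(T\circ\bar f\) is still an extension, now with values in \([\inf_E f,\sup_E f]\); being \(1\)-Lipschitz, \(T\) increases neither \(\Lip\) nor \(\lip_a\), so \(\Lip(T\circ\bar f)\le\Lip(f)+\eps\) and \(\lip_a(T\circ\bar f)(x_0)\le\lip_a(f)(x_0)\) persist. Because \(T\circ\bar f\) still restricts to \(f\) on \(E\), the automatic lower bound from the first paragraph applies to it as well, upgrading the last inequality to \(\lip_a(T\circ\bar f)(x_0)=\lip_a(f)(x_0)\) for every \(x_0\in E\) and completing the argument.
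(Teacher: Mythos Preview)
The paper does not prove this statement: it is quoted from Di Marino--Gigli--Pratelli and used as a black box in the proof of the main approximation theorem. So there is no in-paper argument to compare against; what follows concerns your sketch on its own merits.

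Your reductions are correct: the lower bound \(\lip_a(h)(x_0)\ge\lip_a(f)(x_0)\) for any extension \(h\), and the \(1\)-Lipschitz truncation for the bounded case, both work as you describe. The inf-convolution \(\bar f(x)=\inf_{y\in E}\{f(y)+c(x,y)\}\) is also the right shape, and the triangle inequality for the chaining cost gives the global Lipschitz bound. The gap is in the simultaneous existence of a rate \(g\) with the two properties you require: (a) \(g\) upper semicontinuous on \(\X\) with \(g(x_0)=\lip_a(f)(x_0)\) for every \(x_0\in E\), so that the one-step estimate yields \(\lip_a(\bar f)(x_0)\le\lip_a(f)(x_0)\); and (b) the induced chaining cost dominates the \(f\)-increments on \(E\), so that \(\bar f|_E=f\). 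For the cost you wrote, these are incompatible. Take \(\X=[0,1]\), \(E=\{0,1\}\), \(f(0)=0\), \(f(1)=1\). Both points are isolated in \(E\), so \(\lip_a(f)(0)=\lip_a(f)(1)=0\), forcing \(g(0)=g(1)=0\). The one-step chain from \(1\) to \(0\) then has cost \(g(1)\,\sfd(1,0)=0\), hence \(c(1,0)=0<1=f(1)-f(0)\), and domination fails no matter how \(g\) is chosen on \((0,1)\). More generally, your cost always satisfies \(c(x,y)\le g(x)\,\sfd(x,y)\); so whenever \(\lip_a(f)\) vanishes at some \(x\in E\) that is not a global minimiser of \(f\), the extension property is lost.

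The upshot is that a purely pointwise rate equal to \(\lip_a(f)\) on \(E\) cannot feed the chaining cost; what is needed is a cost that records the \emph{scale} of each step as well as its basepoint, so that a long first step out of \(x_0\in E\) is charged at the Lipschitz constant of \(f\) on a ball of comparable radius, while short steps are charged close to \(\lip_a(f)(x_0)\). Your ``delicate case'' paragraph correctly names this tension, but the proposed resolution---an upper semicontinuous discrete upper gradient with the prescribed boundary values on \(E\)---does not exist in the two-point example above, so the plan as written does not close.
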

\subsection{Banach spaces}
We recall those concepts and results about Banach spaces that are relevant to this note, referring to \cite{HJ14,Casazza01}
for a thorough account of these topics. We denote by \(\B^*\) the dual of a Banach space \(\B\), by \(C^\infty(\B)\) the
space of smooth functions \(f\colon\B\to\R\), and by \(\d_x f\in\B^*\) the Fr\'{e}chet differential of \(f\) at \(x\in\B\).
Observe that \(\|\d_x f\|_{\B^*}=\lip_a(f)(x)\). A distinguished subalgebra of \(C^\infty(\B)\) is the space of
\textbf{smooth cylindrical functions} on \(\B\), which is given by
\begin{equation}\label{eq:def_Cyl}
\Cyl(\B)\coloneqq\bigg\{g\circ p\colon\B\to\R\;\bigg|\;\begin{array}{ll}
\mathbb V\text{ finite-dimensional Banach space},\,p\colon\B\to\mathbb V\\
\text{bounded linear operator},\,g\in C^\infty(\mathbb V)\cap\LIP_b(\mathbb V)
\end{array}\bigg\}.
\end{equation}
A Banach space \(\B\) is said to have the \textbf{metric approximation property} if the following holds: given any
\(\varepsilon>0\) and a compact set \(K\subseteq\B\), there exists a finite-rank linear operator \(p\colon\B\to\B\) with operator
norm at most \(1\) such that \(p|_K\) is an \(\varepsilon\)-approximation of the identity, meaning that
\[
\|p(x)-x\|_\B\leq\varepsilon\quad\text{ for every }x\in K.
\]
In the above, ``\(K\) compact'' can be replaced by ``\(K\) finite''. Indeed, if \(F\) is an \(\varepsilon\)-net
for \(K\) compact and \(p|_F\) is an \(\varepsilon\)-approximation of the identity, then \(p|_K\) is a
\(3\varepsilon\)-approximation of the identity.
\medskip

Albeit well-known to the experts, for the reader's usefulness we give a self-contained proof of the fact that every Banach
space can be embedded linearly and isometrically into a Banach space having the metric approximation property, see Corollary
\ref{cor:embed_in_MAP}. To this aim, we will work with the space \(\ell_\infty(S)\) of all bounded functions from \(S\) to
\(\R\), where \(S\neq\varnothing\) is an arbitrary set. The elements of \(\ell_\infty(S)\) will be denoted by
\(a=(a_x)_{x\in S}\subseteq\R\). The space \(\ell_\infty(S)\) is a Banach space if endowed with the pointwise vector space
operations and with the norm \(\|a\|_{\ell_\infty(S)}\coloneqq\sup_{x\in S}|a_x|\).
\begin{remark}\label{rmk:embed_into_MAP}{\rm
\emph{Any given Banach space \(\B\) embeds linearly and isometrically into \(\ell_\infty(\mathbb S_{\B^*})\)},
where \(\mathbb S_{\B^*}\) denotes the unit sphere \(\{\omega\in\B^*\,:\,\|\omega\|_{\B^*}=1\}\) of \(\B^*\).
Indeed, \(\B\ni v\mapsto(\omega(v))_{\omega\in\mathbb S_{\B^*}}\in\ell_\infty(\mathbb S_{\B^*})\) is a linear
isometry thanks to the Hahn--Banach extension theorem.
\fr}\end{remark}
\begin{lemma}\label{lem:ell_infty_MAP}
Let \(S\neq\varnothing\) be a given set. Then \(\ell_\infty(S)\) has the metric approximation property.
\end{lemma}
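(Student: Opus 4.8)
The plan is to verify the metric approximation property directly from its definition, exploiting the reduction to \emph{finite} sets recorded in the paragraph preceding the statement. So I would fix \(\varepsilon>0\) and a finite collection \(F=\{a^{(1)},\dots,a^{(n)}\}\subseteq\ell_\infty(S)\), and aim to produce a finite-rank linear operator \(p\colon\ell_\infty(S)\to\ell_\infty(S)\) with \(\|p\|\leq 1\) and \(\|p(a^{(i)})-a^{(i)}\|_{\ell_\infty(S)}\leq\varepsilon\) for every \(i\). Once this finite case is settled, an arbitrary compact \(K\) is handled by applying the construction to a finite \(\varepsilon\)-net of \(K\) and invoking the \(3\varepsilon\)-estimate quoted above; since the target accuracy is arbitrary, this yields the full property.

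The key mechanism is that finitely many \emph{bounded} functions can be made simultaneously almost constant on each piece of a suitable finite partition of \(S\). I would set \(M\coloneqq\max_i\|a^{(i)}\|_{\ell_\infty(S)}\) and consider the map \(\Phi\colon S\to[-M,M]^n\), \(\Phi(x)\coloneqq(a^{(1)}_x,\dots,a^{(n)}_x)\). Partitioning \([-M,M]^n\) into finitely many half-open boxes \(Q_1,\dots,Q_m\) of side length at most \(\varepsilon\), and pulling them back through \(\Phi\), I obtain a partition of \(S\) into the sets \(S_k\coloneqq\Phi^{-1}(Q_k)\); discarding the empty ones leaves finitely many nonempty pieces \(S_1,\dots,S_{m'}\). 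By construction each coordinate \(a^{(i)}\) oscillates by at most \(\varepsilon\) on every \(S_k\).

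Next I would pick a representative point \(x_k\in S_k\) for each \(k\) and define \(p\) by declaring \((p(b))_x\coloneqq b_{x_k}\) whenever \(x\in S_k\); equivalently \(p(b)=\sum_k b_{x_k}\,\1_{S_k}\). Then \(p\) is linear with image contained in \(\mathrm{span}\{\1_{S_1},\dots,\1_{S_{m'}}\}\), hence of finite rank, and \(|(p(b))_x|=|b_{x_k}|\leq\|b\|_{\ell_\infty(S)}\) gives \(\|p\|\leq 1\). Finally, for each \(i\) and each \(x\in S_k\) the oscillation bound yields \(|(p(a^{(i)}))_x-a^{(i)}_x|=|a^{(i)}_{x_k}-a^{(i)}_x|\leq\varepsilon\), whence \(\|p(a^{(i)})-a^{(i)}\|_{\ell_\infty(S)}\leq\varepsilon\).

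I do not expect a deep obstacle: the only points needing care are to choose the boxes (half-open) so that their preimages genuinely partition \(S\), and to keep the operator norm equal to \(1\) rather than \(1+\varepsilon\) — which is automatic for this ``read off the value at a representative'' operator, as it never enlarges the supremum. The one genuinely essential ingredient is that the joint boundedness of the finitely many functions in \(F\) confines \(\Phi(S)\) to a bounded box, forcing only finitely many cells and thus a genuinely finite-rank approximant.
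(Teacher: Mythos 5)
Your proof is correct and takes essentially the same route as the paper's: both reduce to a finite family \(a^{1},\dots,a^{n}\), partition \(S\) into finitely many pieces on which the vector of values \((a^{1}_x,\dots,a^{n}_x)\) oscillates by at most \(\varepsilon\), and define \(p\) by reading off the value at a representative point of each piece, giving a norm-one finite-rank operator. The only difference is cosmetic: you make explicit, via half-open boxes in \([-M,M]^n\) pulled back through \(\Phi\), the partition whose existence the paper simply asserts.
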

\begin{proof}
Fix any \(\varepsilon>0\) and \(a^1,\ldots,a^n\in\ell_\infty(S)\). We can partition \(S\) into non-empty sets
\(S_1,\ldots,S_k\) such that \(\{(a^1_x,\ldots,a^n_x)\,:\,x\in S_j\}\subseteq\R^n\) has diameter
at most \(\varepsilon\) for every \(j=1,\ldots,k\). Fix also some \((x_1,\ldots,x_k)\in S_1\times\ldots\times S_k\).
Given any \(a\in\ell_\infty(S)\), we define \(p(a)=(p(a)_x)_{x\in S}\in\ell_\infty(S)\) as
\[
p(a)_x\coloneqq a_{x_j}\quad\text{ for every }j=1,\ldots,k\text{ and }x\in S_j.
\]
The resulting map \(p\colon\ell_\infty(S)\to\ell_\infty(S)\) is a finite-rank linear operator of operator norm \(1\).
Moreover, we have that \(\|p(a^i)-a^i\|_{\ell_\infty(S)}=\sup_{j=1,\ldots,k}\sup_{x\in S_j}|a^i_{x_j}-a^i_x|\leq\varepsilon\)
holds for every \(i=1,\ldots,n\).
\end{proof}

Combining Lemma \ref{lem:ell_infty_MAP} with Remark \ref{rmk:embed_into_MAP}, we obtain the following embedding result.
\begin{corollary}\label{cor:embed_in_MAP}
Every Banach space embeds linearly and isometrically into a Banach space having the metric approximation property.
\end{corollary}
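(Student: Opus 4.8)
The plan is to obtain the statement as a direct combination of Remark \ref{rmk:embed_into_MAP} and Lemma \ref{lem:ell_infty_MAP}, since between them these two results already contain all the substance. Given a Banach space \(\B\), the idea is to take as ambient space exactly the one provided by the Remark. Concretely, I would set \(S\coloneqq\mathbb S_{\B^*}\) and invoke Remark \ref{rmk:embed_into_MAP} to produce the linear isometry \(\B\ni v\mapsto(\omega(v))_{\omega\in\mathbb S_{\B^*}}\in\ell_\infty(\mathbb S_{\B^*})\), whose existence rests on the Hahn--Banach extension theorem (it guarantees that for each \(v\in\B\) there is a norm-one functional attaining \(\|v\|_\B\), so that the supremum norm of the image equals \(\|v\|_\B\)). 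This realises \(\B\) as a closed linear subspace of \(\ell_\infty(\mathbb S_{\B^*})\), isometrically.

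It then remains only to observe that the target space has the desired property, which is precisely the content of Lemma \ref{lem:ell_infty_MAP} applied with \(S=\mathbb S_{\B^*}\): for any nonempty index set, \(\ell_\infty(S)\) enjoys the metric approximation property, the finite-rank approximating operators being the projections that collapse \(S\) along a finite partition into pieces on which the relevant finitely many coordinates vary by at most \(\varepsilon\). Composing these two facts exhibits \(\B\) as a linear isometric subspace of a Banach space with the metric approximation property, which is the assertion of the Corollary.

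The one point deserving a word of care is the nonemptiness hypothesis in Lemma \ref{lem:ell_infty_MAP}: the argument above requires \(\mathbb S_{\B^*}\neq\varnothing\). This holds as soon as \(\B\neq\{0\}\), again by Hahn--Banach, which forces \(\B^*\neq\{0\}\) and hence furnishes a norm-one functional. The degenerate case \(\B=\{0\}\) can be dismissed separately, since the trivial space embeds linearly and isometrically into any Banach space with the metric approximation property, for instance into \(\R\). Accordingly, I do not anticipate any genuine obstacle here: the proof is a short assembly of the preceding Remark and Lemma, with the only subtlety being the trivial-space caveat just noted.
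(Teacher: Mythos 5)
Your proposal is correct and is exactly the paper's argument: the Corollary is obtained there by combining Remark \ref{rmk:embed_into_MAP} (the isometric embedding \(\B\hookrightarrow\ell_\infty(\mathbb S_{\B^*})\)) with Lemma \ref{lem:ell_infty_MAP} (that \(\ell_\infty(S)\) has the metric approximation property). Your extra remark about the degenerate case \(\B=\{0\}\), where \(\mathbb S_{\B^*}=\varnothing\), is a sensible precaution that the paper silently omits, and you dispose of it correctly.
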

\begin{remark}{\rm
Since spaces under consideration in metric measure geometry are often assumed to be separable, it might be
useful to observe that \emph{\(C([0,1])\) is a universal separable Banach space having the metric approximation
property}, where `universal separable Banach space' means that it is a separable Banach space wherein each separable Banach
space can be embedded linearly and isometrically. See the Banach--Mazur theorem \cite[Proposition 1.5]{BP75} and
\cite[Example 4.2]{Ryan02}.
\fr}\end{remark}
\section{The approximation result}\label{s:approx_result}
We now pass to the main result of this note. First, we make a preliminary observation.
\begin{remark}\label{rmk:convolution}{\rm
Let \((\mathbb V,\|\cdot\|)\) be a finite-dimensional Banach space and \(f\in\LIP_b(\mathbb V)\). Then for any given \(\varepsilon>0\)
there exists a function \(f_\varepsilon\in C^\infty(\mathbb V)\cap\LIP_b(\mathbb V)\) such that the following properties hold:
\begin{itemize}
\item \(\inf_{\mathbb V}f\leq f_\varepsilon\leq\sup_{\mathbb V}f\) on \(\mathbb V\) and \(\Lip(f_\varepsilon)\leq\Lip(f)\).
\item \(|f_\varepsilon(x)-f(x)|\leq\Lip(f)\varepsilon\) for every \(x\in\mathbb V\).
\item \(\lip_a(f_\varepsilon)(x)\leq\Lip(f;B_\varepsilon(x))\) for every \(x\in\mathbb V\).
\end{itemize}
This claim can be proved via a standard convolution argument, see e.g.\ \cite[Lemma 2.9]{Gig:Pas:21}.
\fr}\end{remark}
\begin{theorem}\label{thm:approx_with_cyl}
Let \(\B\) be a Banach space and \(E\subseteq\B\) a \(\sigma\)-compact set. Let \(f\in{\rm LIP}_b(\B)\) and \(\varepsilon>0\)
be given. Then there exists a sequence \((f_n)_n\subseteq\Cyl(\B)\) such that the following properties hold:
\begin{itemize}
\item[\(\rm i)\)] \(\inf_\B f\leq f_n\leq\sup_\B f\) on \(\B\) and \(\Lip(f_n)\leq\Lip(f)+\varepsilon\) for every \(n\in\N\).
\item[\(\rm ii)\)] \(\lim_n f_n(x)=f(x)\) for every \(x\in E\).
\item[\(\rm iii)\)] \(\varlimsup_n\|\d_x f_n\|_{\B^*}\leq\lip_a(f)(x)\) for every \(x\in E\).
\end{itemize}
\end{theorem}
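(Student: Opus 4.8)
The plan is to reduce the general Banach space case to the metric-approximation-property case via Corollary \ref{cor:embed_in_MAP}, and then exploit finite-rank operators to produce genuinely cylindrical approximants. First I would fix a linear isometric embedding \(\iota\colon\B\to\XX\) into a Banach space \(\XX\) with the metric approximation property, as provided by Corollary \ref{cor:embed_in_MAP}. Using Theorem \ref{thm:extension_lip_a} applied to the Lipschitz function \(f\circ\iota^{-1}\) defined on the closed subspace \(\iota(\B)\subseteq\XX\), I would extend \(f\) to a bounded Lipschitz function \(\bar f\in\LIP_b(\XX)\) that preserves the asymptotic slope on \(\iota(E)\) and satisfies \(\Lip(\bar f)\leq\Lip(f)+\varepsilon\) together with \(\inf_\B f\leq\bar f\leq\sup_\B f\). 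The point of moving to \(\XX\) is that now we have finite-rank near-identity projections at our disposal, which is exactly what the cylindrical structure \eqref{eq:def_Cyl} requires.

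Next I would write \(E=\bigcup_m K_m\) with \(K_m\) compact and increasing, and use the metric approximation property of \(\XX\) to choose, for each \(n\), a finite-rank linear operator \(p_n\colon\XX\to\XX\) with \(\|p_n\|\leq 1\) that is a \(1/n\)-approximation of the identity on the compact set \(\iota(K_n)\). Writing \(\mathbb V_n\coloneqq p_n(\XX)\), a finite-dimensional Banach space, the composition \(\bar f\restr{\mathbb V_n}\) is bounded Lipschitz on \(\mathbb V_n\), so I can apply Remark \ref{rmk:convolution} with parameter \(\eps_n=1/n\) to obtain a smooth function \(g_n\in C^\infty(\mathbb V_n)\cap\LIP_b(\mathbb V_n)\) approximating \(\bar f\restr{\mathbb V_n}\) while controlling its asymptotic slope by \(\Lip(\bar f\restr{\mathbb V_n};B_{\eps_n}(\cdot))\). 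The candidate approximant on the original space is then \(f_n\coloneqq g_n\circ p_n\circ\iota\colon\B\to\R\); since \(p_n\circ\iota\colon\B\to\mathbb V_n\) is a bounded linear operator into a finite-dimensional space and \(g_n\) is smooth and bounded Lipschitz, each \(f_n\) indeed lies in \(\Cyl(\B)\).

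The verification of i)--iii) then proceeds by chaining the estimates. Item i) is immediate from the bounds on \(\bar f\), the contraction property \(\|p_n\|\leq1\), and the corresponding bounds in Remark \ref{rmk:convolution}. For item ii), fix \(x\in E\); for all large \(n\) we have \(x\in K_n\), so \(\|p_n(\iota(x))-\iota(x)\|_\XX\leq 1/n\to0\), and combining the continuity of \(\bar f\) with the convolution estimate \(|g_n-\bar f|\leq\Lip(\bar f)/n\) on \(\mathbb V_n\) yields \(f_n(x)=g_n(p_n(\iota(x)))\to\bar f(\iota(x))=f(x)\). For item iii) I would use the chain rule \(\|\d_x f_n\|_{\B^*}\leq\|\d_{p_n(\iota(x))}g_n\|_{\mathbb V_n^*}\,\|p_n\circ\iota\|\) together with \(\|\d_y g_n\|_{\mathbb V_n^*}=\lip_a(g_n)(y)\leq\Lip(\bar f\restr{\mathbb V_n};B_{1/n}(y))\), and then bound the local Lipschitz constant of \(\bar f\) restricted to \(\mathbb V_n\) near \(p_n(\iota(x))\) by the local Lipschitz constant of \(\bar f\) on \(\XX\) near \(\iota(x)\), finally passing to the \(\limsup\) and invoking the slope-preservation \(\lip_a(\bar f)(\iota(x))=\lip_a(f\circ\iota^{-1})(\iota(x))=\lip_a(f)(x)\) on \(\iota(E)\).

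The main obstacle I anticipate is item iii), specifically controlling the asymptotic slope of the composition uniformly. The delicate point is that the convolution bound only gives \(\lip_a(g_n)(y)\leq\Lip(\bar f\restr{\mathbb V_n};B_{1/n}(y))\), and one must argue that the balls \(B_{1/n}(p_n(\iota(x)))\) in \(\mathbb V_n\) shrink to \(\iota(x)\) in \(\XX\) in a way that lets the \emph{local} Lipschitz constants of the restrictions \(\bar f\restr{\mathbb V_n}\) converge down to \(\lip_a(\bar f)(\iota(x))\); since both the radius \(1/n\) and the image point \(p_n(\iota(x))\) vary with \(n\), and since restricting to the moving subspace \(\mathbb V_n\) can only decrease the local Lipschitz constant relative to the ambient one, the estimate \(\Lip(\bar f\restr{\mathbb V_n};B_{1/n}(p_n(\iota(x))))\leq\Lip(\bar f;B_{r}(\iota(x)))\) for any fixed \(r>0\) and all sufficiently large \(n\) must be established carefully, after which taking \(r\downarrow0\) closes the argument.
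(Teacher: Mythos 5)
Your proposal is correct and follows essentially the same route as the paper's own proof: embed \(\B\) into a space with the metric approximation property via Corollary \ref{cor:embed_in_MAP}, extend \(f\) with Theorem \ref{thm:extension_lip_a}, compose finite-rank near-identity operators with convolution-smoothed restrictions (Remark \ref{rmk:convolution}), and close item iii) by the inclusion \(B_{1/n}(p_n(\iota(x)))\subseteq B_{2/n}(\iota(x))\) together with monotonicity of local Lipschitz constants under restriction. Even the "delicate point" you flag is resolved exactly as in the paper, by noting \(\Lip(\bar f\restr{\mathbb V_n};B_{1/n}(p_n(\iota(x))))\leq\Lip(\bar f;B_{2/n}(\iota(x)))\) and letting the radius shrink to invoke the slope preservation of the extension.
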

\begin{proof}
Thanks to Corollary \ref{cor:embed_in_MAP}, we have that \(\B\) is a subspace of some Banach space \(\mathbb M\) having the metric approximation property.
By Theorem \ref{thm:extension_lip_a}, there exists an extension \(\bar f\in\LIP_b(\mathbb M)\) of \(f\) such that \(\inf_\B f\leq\bar f\leq\sup_\B f\) on \(\mathbb M\),
\(\Lip(\bar f)\leq\Lip(f)+\varepsilon\), and \(\lip_a(\bar f)(x)=\lip_a(f)(x)\) for every \(x\in\B\). Let us write the set \(E\) as \(\bigcup_{n\in\N}K_n\), for some
increasing sequence \((K_n)_n\) of compact subsets of \(\B\). Given any \(n\in\N\), we fix a finite-rank \(1\)-Lipschitz linear operator \(p_n\colon\mathbb M\to\mathbb M\) such that
\[
\|p_n(x)-x\|_{\mathbb M}\leq\frac{1}{n}\quad\text{ for every }x\in K_n.
\]
Since \(\mathbb V_n\coloneqq p_n(\mathbb M)\) is finite-dimensional and \(\bar f|_{\mathbb V_n}\in\LIP_b(\mathbb V_n)\), by Remark \ref{rmk:convolution} we know that
there exists a function \(g_n\in C^\infty(\mathbb V_n)\cap\LIP_b(\mathbb V_n)\), with \(\inf_{\mathbb M}\bar f\leq g_n\leq\sup_{\mathbb M}\bar f\) and \(\Lip(g_n)\leq\Lip(\bar f;\mathbb V_n)\),
such that \(|g_n-\bar f|\leq 1/n\) and \(\lip_a(g_n)\leq\Lip(\bar f;B_{1/n}(\cdot)\cap\mathbb V_n)\) on \(\mathbb V_n\). Define \(f_n\coloneqq g_n\circ p_n|_{\mathbb B}\in\Cyl(\B)\).
Given that \(\inf_\B f=\inf_{\mathbb M}\bar f\leq f_n\leq\sup_{\mathbb M}\bar f=\sup_\B f\) and \(\Lip(f_n)\leq\Lip(g_n)\leq\Lip(\bar f)\leq\Lip(f)+\varepsilon\),
item i) is proved. Moreover, given any \(n,k\in\N\) with \(n\leq k\) and any \(x\in K_n\), we can estimate
\[
|f_k(x)-f(x)|\leq|(g_k-\bar f)(p_k(x))|+|\bar f(p_k(x))-f(x)|\leq\frac{1}{k}+\Lip(\bar f)\|p_k(x)-x\|_{\mathbb M}\leq\frac{\Lip(\bar f)+1}{k},
\]
which yields \(\lim_k f_k(x)=f(x)\) for every \(x\in E\), proving ii). Finally, if \(n\in\N\) and \(x\in K_n\), then
\[
\lip_a(f_n)(x)\leq\lip_a(g_n)(p_n(x))\leq\Lip(\bar f;B_{1/n}(p_n(x)))\leq\Lip(\bar f;B_{2/n}(x))
\]
(here we use the fact that \(B_{1/n}(p_n(x))\subseteq B_{2/n}(x)\), as \(\|p_n(x)-x\|_{\mathbb M}\leq 1/n\)), whence it follows that
\[
\varlimsup_n\lip_a(f_n)(x)\leq\lim_n\Lip(\bar f;B_{2/n}(x))=\lip_a(\bar f)(x)=\lip_a(f)(x)\quad\text{ for every }x\in E.
\]
Consequently, also item iii) is proved (since \(\lip_a(f_n)=\|\d_\cdot f_n\|_{\B^*}\)). The statement is achieved.
\end{proof}

Let \(\mu\geq 0\) be a finite Borel measure on \(\B\) concentrated on a \(\sigma\)-compact set, e.g.\ \(\mu\) is Radon.
(Assuming the cardinality of any set is an Ulam number -- which is consistent with the ZFC set theory -- each
finite Borel measure is concentrated on a \(\sigma\)-compact set; see \cite[Lemma 2.9]{AmbrosioKirchheim00}.)
Then Theorem \ref{thm:approx_with_cyl} implies that the following holds: \emph{given any \(f\in{\rm LIP}_b(\B)\)
and \(\varepsilon>0\), there exists a sequence \((f_n)_n\subseteq\Cyl(\B)\), with \(\inf_\B f\leq f_n\leq\sup_\B f\) and \(\sup_n\Lip(f_n)\leq\Lip(f)+\varepsilon\), such that}
\[
\lim_n f_n(x)=f(x),\quad\varlimsup_n\|\d_x f_n\|_{\B^*}\leq\lip_a(f)(x)\quad\text{ for }\mu\text{-a.e.\ }x\in\B.
\]
By applying the dominated convergence theorem, one can readily deduce that \emph{\(\Cyl(\B)\) is strongly dense
in \(L^p(\mu)\) for every \(p\in[1,\infty)\)}. The latter property follows also from \cite[Lemma 2.1.27]{Sav19},
since \(\Cyl(\B)\) is compatible (in the sense of \cite[Definition 2.1.17]{Sav19}), cf.\ \cite[Example 2.1.19]{Sav19}.
\section{Applications to Sobolev and BV calculus}
As a useful consequence of Theorem \ref{thm:approx_with_cyl}, it is possible to obtain density results of the following kind:
\emph{if Lipschitz functions are `dense in energy' in some given functional space over a weighted Banach space,
then cylindrical functions are dense in energy as well.} Let us now illustrate two such examples, which motivated our interest
in the approximation result proved in Theorem \ref{thm:approx_with_cyl}.
\medskip

Let \(\B\) be a separable Banach space and let \(\mu\geq 0\) be a finite Borel measure on \(\B\). Following
\cite{AmbrosioGigliSavare11,AmbrosioGigliSavare11-3} (also \cite{Cheeger00}) and \cite{Mir03},
we consider the following notions of Sobolev and BV spaces over \((\B,\mu)\).
\begin{itemize}
\item Given \(p\in[1,\infty)\), a function \(f\in L^p(\mu)\) belongs to the \textbf{Sobolev space} \(W^{1,p}(\B,\mu)\)
if there exist \((f_n)_n\subseteq\LIP_b(\B)\) and \(G\in L^p(\mu)^+\) such that \(f_n\to f\) and
\(\lip_a(f_n)\to G\) in \(L^p(\mu)\). The unique \(\mu\)-a.e.\ minimal such function \(G\) is called the \textbf{minimal relaxed slope} \(|{\rm D}f|\) of \(f\).
The following fact holds: if a given sequence \((f_n)_n\subseteq\LIP_b(\B)\) satisfies \(f_n\to f\) in \(L^p(\mu)\) and \(\varlimsup_n\|\lip_a(f_n)\|_{L^p(\mu)}\leq\||{\rm D}f|\|_{L^p(\mu)}\),
then \(\lip_a(f_n)\to|{\rm D}f|\) in \(L^p(\mu)\).
\item A function \(f\in L^1(\mu)\) belongs to the space \(BV(\B,\mu)\) of \textbf{functions of bounded variation} if
there exists \((f_n)_n\subseteq\LIP_b(\B)\) such that \(f_n\to f\) in \(L^1(\mu)\) and \(\varliminf_n\int_\B\lip_a(f_n)\,\d\mu<+\infty\).
Given any \(f\in BV(\B,\mu)\), there exists a unique finite Borel measure \(|{\bf D}f|\) on \(\B\) such that
\[
|{\bf D}f|(\Omega)=\inf\bigg\{\varliminf_n\int_\Omega\lip_a(f_n)\,\d\mu\;\bigg|\;\LIP_{loc}(\Omega)\cap L^1(\mu|_\Omega)\ni f_n\to f|_\Omega\text{ in }L^1(\mu|_\Omega)\bigg\}
\]
holds for every open set \(\Omega\subseteq\B\), where \(\LIP_{loc}(\Omega)\) denotes the space of all real-valued locally Lipschitz functions on \(\Omega\). Moreover,
we have that \(|{\bf D}f|(\B)=\inf_{(f_n)_n}\varliminf_n\int_\B\lip_a(f_n)\,\d\mu\), where the infimum is taken among all sequences \((f_n)_n\subseteq\LIP_b(\B)\)
with \(f_n\to f\) in \(L^1(\mu)\).
\end{itemize}

Combining Theorem \ref{thm:approx_with_cyl} with a diagonal argument, we can easily obtain the following result.
\begin{theorem}\label{thm:dens_cyl}
Let \(\B\) be a separable Banach space. Let \(\mu\geq 0\) be a finite Borel measure on \(\B\).
\begin{itemize}
\item[\(\rm i)\)] Let \(p\in[1,\infty)\) and \(f\in W^{1,p}(\B,\mu)\) be given. Then there exists a sequence
\((f_n)_n\subseteq\Cyl(\B)\) such that \(f_n\to f\) and \(\|\d_\cdot f_n\|_{\B^*}\to|{\rm D}f|\) in \(L^p(\mu)\).
\item[\(\rm ii)\)] Let \(f\in BV(\B,\mu)\) be given. Then there exists a sequence \((f_n)_n\subseteq\Cyl(\B)\) such
that \(f_n\to f\) in \(L^1(\mu)\) and \(\int_\B\|\d_\cdot f_n\|_{\B^*}\,\d\mu\to|{\bf D}f|(\B)\). In particular, it holds that
\(\|\d_\cdot f_n\|_{\B^*}\mu\rightharpoonup|{\bf D}f|\) weakly, i.e.\ in duality with the space of real-valued bounded continuous functions on \(\B\).
\end{itemize}
\end{theorem}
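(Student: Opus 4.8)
The plan is to deduce both items from the $\mu$-a.e.\ form of Theorem~\ref{thm:approx_with_cyl}, which is available here: since $\B$ is separable, it is Polish, so the finite Borel measure $\mu$ is Radon and in particular concentrated on a $\sigma$-compact set, exactly as required by the paragraph following Theorem~\ref{thm:approx_with_cyl}. I would start by fixing suitable recovery sequences in $\LIP_b(\B)$. For item i) I take $(g_m)_m\subseteq\LIP_b(\B)$ with $g_m\to f$ and $\lip_a(g_m)\to|{\rm D}f|$ in $L^p(\mu)$, which exists by the relaxation procedure defining the minimal relaxed slope; in particular $\|\lip_a(g_m)\|_{L^p(\mu)}\to\||{\rm D}f|\|_{L^p(\mu)}$. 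For item ii) I instead take $(g_m)_m\subseteq\LIP_b(\B)$ with $g_m\to f$ in $L^1(\mu)$ and $\lim_m\int_\B\lip_a(g_m)\,\d\mu=|{\bf D}f|(\B)$, which is available from the infimum defining $|{\bf D}f|(\B)$ after a diagonal extraction.

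Next, for each fixed $m$ I would apply the $\mu$-a.e.\ version of Theorem~\ref{thm:approx_with_cyl} to $g_m$ (with $\varepsilon=1/m$), producing an equi-bounded sequence $(h_{m,k})_k\subseteq\Cyl(\B)$ with $\sup_k\Lip(h_{m,k})\leq\Lip(g_m)+1/m$, with $h_{m,k}\to g_m$ $\mu$-a.e., and with $\varlimsup_k\|\d_\cdot h_{m,k}\|_{\B^*}\leq\lip_a(g_m)$ $\mu$-a.e.\ as $k\to\infty$. Because $\mu$ is finite and the $h_{m,k}$ are uniformly bounded in $k$, dominated convergence gives $h_{m,k}\to g_m$ in $L^p(\mu)$ (resp.\ $L^1(\mu)$); because $\|\d_\cdot h_{m,k}\|_{\B^*}$ is uniformly bounded, the reverse Fatou lemma applied to $\|\d_\cdot h_{m,k}\|_{\B^*}^p$ (resp.\ to $\|\d_\cdot h_{m,k}\|_{\B^*}$) yields $\varlimsup_k\big\|\|\d_\cdot h_{m,k}\|_{\B^*}\big\|_{L^p(\mu)}\leq\|\lip_a(g_m)\|_{L^p(\mu)}$ (resp.\ $\varlimsup_k\int_\B\|\d_\cdot h_{m,k}\|_{\B^*}\,\d\mu\leq\int_\B\lip_a(g_m)\,\d\mu$). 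A diagonal choice $k=k(m)$ then provides $\tilde h_m:=h_{m,k(m)}\in\Cyl(\B)$ with $\|\tilde h_m-g_m\|_{L^p(\mu)}\leq1/m$ and $\big\|\|\d_\cdot\tilde h_m\|_{\B^*}\big\|_{L^p(\mu)}\leq\|\lip_a(g_m)\|_{L^p(\mu)}+1/m$ (resp.\ the $L^1$ and integral analogues). Hence $\tilde h_m\to f$ and $\varlimsup_m\big\|\|\d_\cdot\tilde h_m\|_{\B^*}\big\|_{L^p(\mu)}\leq\||{\rm D}f|\|_{L^p(\mu)}$ (resp.\ $\tilde h_m\to f$ in $L^1(\mu)$ and $\varlimsup_m\int_\B\|\d_\cdot\tilde h_m\|_{\B^*}\,\d\mu\leq|{\bf D}f|(\B)$).

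To close item i) I would invoke the optimality fact recalled before the statement: since $(\tilde h_m)_m\subseteq\LIP_b(\B)$, $\tilde h_m\to f$ in $L^p(\mu)$, and $\varlimsup_m\|\lip_a(\tilde h_m)\|_{L^p(\mu)}\leq\||{\rm D}f|\|_{L^p(\mu)}$, one automatically gets $\|\d_\cdot\tilde h_m\|_{\B^*}=\lip_a(\tilde h_m)\to|{\rm D}f|$ in $L^p(\mu)$. To close item ii) I would first use the definition of $|{\bf D}f|(\B)$ as an infimum to obtain the reverse inequality $|{\bf D}f|(\B)\leq\varliminf_m\int_\B\|\d_\cdot\tilde h_m\|_{\B^*}\,\d\mu$, so that the integrals converge to $|{\bf D}f|(\B)$. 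For the weak convergence I set $\nu_m:=\|\d_\cdot\tilde h_m\|_{\B^*}\mu$ and check the two Portmanteau conditions for finite measures: the total masses $\nu_m(\B)$ converge to $|{\bf D}f|(\B)$, and for every open $\Omega\subseteq\B$ the restriction $\tilde h_m|_\Omega\to f|_\Omega$ in $L^1(\mu|_\Omega)$, together with the localized definition of $|{\bf D}f|(\Omega)$, gives $\varliminf_m\nu_m(\Omega)\geq|{\bf D}f|(\Omega)$; here one uses that, $\Omega$ being open, $\lip_a(\tilde h_m)$ agrees with $\|\d_\cdot\tilde h_m\|_{\B^*}$ on $\Omega$. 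These two facts yield $\nu_m\rightharpoonup|{\bf D}f|$ in duality with the bounded continuous functions on $\B$.

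I expect the only genuinely delicate point to be the localization behind the weak convergence in ii): one must verify that the globally cylindrical approximants $\tilde h_m$ remain admissible competitors for $|{\bf D}f|(\Omega)$ on every open set and that their slope integrals restrict correctly, so that lower semicontinuity on all open sets holds \emph{simultaneously} with the global mass convergence $\nu_m(\B)\to|{\bf D}f|(\B)$ --- precisely the input the Portmanteau criterion requires. Everything else (the dominated-convergence and reverse-Fatou estimates, and the diagonal extraction) is routine once the $\mu$-a.e.\ form of Theorem~\ref{thm:approx_with_cyl} and the optimality fact for $|{\rm D}f|$ are in hand.
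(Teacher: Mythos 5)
Your proposal is correct and follows essentially the same route as the paper's proof: a diagonal argument combining recovery sequences in \(\LIP(\B)\) with the \(\mu\)-a.e.\ form of Theorem \ref{thm:approx_with_cyl}, dominated convergence and reverse Fatou to select \(k(m)\), the optimality fact for \(|{\rm D}f|\) to close item i), and the two Portmanteau-type conditions (lower semicontinuity on open sets via the localized definition of \(|{\bf D}f|\), plus convergence of total masses) to close item ii). The few points you flag as delicate --- Radonness of \(\mu\) from separability, the reverse mass inequality from the infimum characterization of \(|{\bf D}f|(\B)\), and the agreement of \(\lip_a(\tilde h_m)\) with \(\|\d_\cdot\tilde h_m\|_{\B^*}\) on open sets --- are all handled correctly and are simply left implicit in the paper's more compressed write-up.
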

\begin{proof}
Let us first prove i). Given any \(n\in\N\), take \(g_n\in\LIP_b(\B)\) such that
\(\|g_n-f\|_{L^p(\mu)}\leq 1/n\) and \(\|\lip_a(g_n)-|{\rm D}f|\|_{L^p(\mu)}\leq 1/n\). Applying Theorem \ref{thm:approx_with_cyl}
(and the paragraph after it), we can find a sequence \((f_n^k)_k\subseteq\Cyl(\B)\) with
\(\sup_k(\|f_n^k\|_{L^\infty(\mu)}+\Lip(f_n^k))<+\infty\) such that \(\lim_k f_n^k(x)=g_n(x)\) and
\(\varlimsup_k\|\d_x f_n^k\|_{\B^*}\leq\lip_a(g_n)(x)\) for \(\mu\)-a.e.\ \(x\in\B\). Applying the dominated convergence theorem and
the reverse Fatou lemma, we can find \(k(n)\in\N\) such that \(f_n\coloneqq f_n^{k(n)}\) satisfies \(\|f_n-g_n\|_{L^p(\mu)}\leq 1/n\)
and \(\|\|\d_\cdot f_n\|_{\B^*}\|_{L^p(\mu)}\leq\|\lip_a(g_n)\|_{L^p(\mu)}+1/n\). In particular, we have that \(\|f_n-f\|_{L^p(\mu)}\leq 2/n\)
and \(\|\|\d_\cdot f_n\|_{\B^*}\|_{L^p(\mu)}\leq\||{\rm D}f|\|_{L^p(\mu)}+2/n\), thus proving that i) holds.
Let us now pass to the verification of ii). The first claim can be proved by slightly modifying the proof of i),
choosing \(g_n\in\LIP_b(\B)\) with \(|\int_\B\lip_a(g_n)\,\d\mu-|{\bf D}f|(\B)|\leq 1/n\). For the second claim, notice that \(\nu_n\coloneqq\|\d_\cdot f_n\|_{\B^*}\mu\)
satisfies \(|{\bf D}f|(\Omega)\leq\varliminf_n\nu_n(\Omega)\) for every \(\Omega\subseteq\B\) open and \(\nu_n(\B)\to|{\bf D}f|(\B)\), so that \(\nu_n\rightharpoonup|{\bf D}f|\) weakly.
\end{proof}
\end{document}